\documentclass[12pt,twoside]{article}

\newcommand{\HeadTitle}{Asymptotic zero distribution of the polynomials $\widetilde{\Xi}_n$}

\newcommand{\HeadTitleTwo}{\begin{center}
\Large{\textit{Asymptotic zero distribution of the polynomials $\widetilde{\Xi}_n$}}
\end{center}}

\usepackage[margin=0.8in]{geometry}
\usepackage{amsmath, amssymb, amsfonts, mathtools}
\usepackage{amsthm}
\usepackage{mathrsfs}
\usepackage{stmaryrd}
\usepackage{esint}
\usepackage{eqnalign}

\usepackage{graphicx}
\usepackage{enumitem}
\usepackage[most]{tcolorbox}
\usepackage{float}
\usepackage{listings}
\usepackage{array}
\usepackage{booktabs}
\usepackage{xcolor}
\usepackage{emptypage}
\usepackage{tensor}
\usepackage{tabularx}

\usepackage{fancyhdr}
\usepackage{hyperref}
\usepackage[nameinlink,noabbrev]{cleveref}

\usepackage{titlesec}
\usepackage[T1]{fontenc}
\usepackage[utf8]{inputenc}
\usepackage{lmodern}
\usepackage{titling}

\usepackage[backend=biber,style=numeric]{biblatex}
\newcommand{\EulerB}[2]{\left\langle {#1 \atop #2} \right\rangle^{\!B}}

\providecommand{\HeadTitle}{} % <-- TITRE VARIABLE
\providecommand{\HeadTitleTwo}{} % <-- TITRE VARIABLE
\providecommand{\HeadAuthor}{Luc Ramsès TALLA WAFFO} % <-- FIXE

\titleformat{\section}[block]
  {\normalfont\large\bfseries\itshape\centering}
  {§\thesection.}
  {1em}
  {}

\titleformat{\subsection}
  {\normalfont\normalsize\bfseries}
  {\thesubsection}
  {1em}
  {}

\newtheorem{theorem}{Theorem}[section]
\newtheorem{lemma}[theorem]{Lemma}

\crefname{corollary}{corollary}{corollaries}
\Crefname{corollary}{Corollary}{Corollaries}

\crefname{conjecture}{conjecture}{conjectures}
\Crefname{conjecture}{Conjecture}{Conjectures}

\crefname{lemma}{lemma}{lemmas}
\Crefname{lemma}{Lemma}{Lemmas}

\crefname{proposition}{proposition}{propositions}
\Crefname{proposition}{Proposition}{Propositions}

\crefname{theorem}{theorem}{theorems}
\Crefname{theorem}{Theorem}{Theorems}

\begin{document}

\thispagestyle{fancy}

\vspace{0.2cm}

\begin{center}
\Large{\HeadTitleTwo}
\end{center}

\hspace{3cm}

\begin{center}
Luc Ramsès TALLA WAFFO \\
Technische Universität Darmstadt\\
Karolinenplatz 5, 64289 Darmstadt, Germany\\
ramses.talla@stud.tu-darmstadt.de\\
\vspace{0.5cm}
February 21, 2026
\end{center}

\begin{abstract}
We consider the polynomials $\Xi_n$ introduced in~\cite{TallaWaffo2025arxiv2511.02843} and studied in further details in\cite{TallaWaffo2026arxiv2602.16761}, which are expressed in terms of Eulerian polynomials of type~B, and study the zero distribution of the rescaled family
\[
  \widetilde{\Xi}_n(x) := \Xi_n(\sqrt{x}), \qquad n\ge 2.
\]
Writing the zeros of $\widetilde{\Xi}_n$ in the interval $(0,1)$ as
$0< x_{n,1} \le \cdots \le x_{n,n-1} < 1$ and forming the empirical measures
\[
  \mu_n := \frac1{n-1}\sum_{k=1}^{n-1}\delta_{x_{n,k}},
\]
we prove that $(\mu_n)_{n\ge2}$ converges weakly to a deterministic probability measure $\mu$ supported on $(0,1)$.
We give an explicit formula for the limiting density and the limiting distribution function of~$\mu$.
The proof is based on a representation of $\Xi_n$ in terms of type~B Eulerian polynomials, 
a ratio asymptotic for these polynomials derived from a classical series identity, and the
Stieltjes transform method.
We also provide numerical experiments illustrating the convergence of the empirical zero distributions to~$\mu$.
\end{abstract}

\vspace{0.2cm}

\paragraph{Notation.}
Throughout this manuscript we use the following conventions. 
We write $\mathbb{N}_0$ for the set of all non-negative integers and $\mathbb{N}$ for the set of positive integers. 
The symbol $\EulerB{n}{k}$ denotes the Eulerian numbers of type~$B$, and $B_n(x)$ denotes the Eulerian polynomial of type~$B$. 

\vspace{0.5cm}

\section*{Introduction}
In \cite{TallaWaffo2026arxiv2602.16761} the polynomials
\begin{equation}\label{eq:Xi-def}
  \Xi_n(x)
  = \frac{(-1)^{n+1}}{2^{4n-1}(2n-1)!}
    \frac{(1+x)^{2n-1}}{x}\,
    B_{2n-1}\!\left(-\frac{1-x}{1+x}\right),
  \qquad n\ge 1,
\end{equation}
were introduced and studied. Here $B_m(z)$ denotes the Eulerian polynomial of type~B of degree $m$.
Following \cite{TallaWaffo2026arxiv2602.16761}, we define
\begin{equation}\label{eq:Xitilde-def}
  \widetilde{\Xi}_n(x) := \Xi_n(\sqrt{x}), \qquad n\ge 2.
\end{equation}
It is easy to check that $\widetilde{\Xi}_n$ is a real polynomial of degree $n-1$\cite{TallaWaffo2026arxiv2602.16761}.

We are interested in the asymptotic distribution of the real zeros of $\widetilde{\Xi}_n$ in the interval $(0,1)$.
For each $n\ge2$ we denote these zeros by
\[
  0 < x_{n,1} \le x_{n,2} \le \cdots \le x_{n,n-1} < 1
\]
and define the empirical zero counting measure
\begin{equation}\label{eq:mu-n-def}
  \mu_n := \frac1{n-1}\sum_{k=1}^{n-1} \delta_{x_{n,k}}.
\end{equation}
Equivalently, the corresponding empirical distribution function is
\[
  F_n(x) := \mu_n((-\infty,x]) 
   = \frac1{n-1}\#\{k: x_{n,k}\le x\}, \qquad x\in\mathbb{R}.
\]

Our main results are \cref{lem:s-n-limit}, \cref{thm:limiting-density} and \cref{thm:limiting-zero-distribution}. In particular, the empirical distribution functions $F_n$ associated with the real zeros of $\widetilde{\Xi}_n$ in $(0,1)$ converge to $F$ at every continuity point of~$F$.

\vspace{0.2cm}

We note in passing that the limit measure $\mu$ is absolutely continuous and its density has a \emph{log--Cauchy} structure, reminiscent of the limiting zero distributions that appear for classical Eulerian polynomials.
The plots in \cref{sec:numerics} show that the convergence of the empirical distribution functions $F_n$ to $F$ is already very pronounced for moderate values of $n$. 

\vspace{0.2cm}

In \cite{TallaWaffo2026arxiv2602.16761}, there is another family of polynomials $\Lambda_n(x)$ exhibiting the same qualitative properties as those considered in this article. However, we did not pursue the study of this family here, since Paul Melotti \cite{MelottiEulerianRoots} has already investigated in detail the distribution of roots of Eulerian polynomials of type~A. 
In fact, the root distribution of the family $\Lambda_n$ can be readily deduced from his results by means of suitable bijective transformations. From this perspective, the present work may be viewed as an extension of Melotti’s results from Eulerian polynomials of type~A to those of type~B.

\vspace{0.3cm}

\section{Preliminaries}\label[section]{sec:preliminaries}

\begin{lemma}\label[lemma]{lemma:Sm-ratio}
Let
\[
S_m(x) := \sum_{k\ge 0} (2k+1)^m x^k,
\]
for $m\in\mathbb{N}$ and $0<x<1$. Then
\[
\lim_{m\to\infty} \frac{1}{m}\,\frac{S_{m+1}(x)}{S_m(x)} \;=\; \frac{2}{-\log x}.
\]
\end{lemma}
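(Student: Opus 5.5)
The approach is a Laplace-type analysis of the series, or equivalently a Lindelöf/Mellin-type expansion. Put $t:=-\log x>0$, so that $x^k=e^{-tk}$. Writing $u=2k+1$, we get
\[
S_m(x)=e^{t/2}\sum_{k\ge0}(2k+1)^m e^{-t(2k+1)/2}=e^{t/2}\sum_{u\ \mathrm{odd}} f_m(u),\qquad f_m(u):=u^m e^{-tu/2}.
\]
The summand $f_m$ is unimodal in $u$, with maximum at $u^*=2m/t$. As $m\to\infty$ the peak has width of order $\sqrt m/t$, which tends to infinity, while the lattice spacing stays equal to $2$. So the sum should be well approximated by the integral:
\[
\sum_{u\ \mathrm{odd}} f_m(u)=\frac12\int_0^\infty u^m e^{-tu/2}\,du\,\bigl(1+o(1)\bigr)=\frac12\,m!\,(2/t)^{m+1}\bigl(1+o(1)\bigr).
\]
Taking the ratio then gives
\[
\frac{S_{m+1}}{S_m}=(m+1)\frac{2}{t}\,(1+o(1)),
\]
and dividing by $m$ yields the limit $2/(-\log x)$.

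\textbf{Step 1 (sum versus integral).} For a unimodal nonnegative function $g$ and a lattice of spacing $2$, one has
\[
\Bigl|\sum_{u\ \mathrm{odd}\ge1} g(u)-\tfrac12\int_0^\infty g\Bigr|\le C\max g.
\]
This follows by splitting at the mode and using monotone comparison on each side. Applying it to $g=f_m$, it remains to show that $\max f_m=o\bigl(\int_0^\infty f_m\bigr)$.

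\textbf{Step 2 (the maximum is negligible).} We have $\max f_m=(2m/t)^m e^{-m}$ and $\int_0^\infty f_m=m!\,(2/t)^{m+1}$. By Stirling's formula, $m!\sim \sqrt{2\pi m}\,m^m e^{-m}$, so
\[
\frac{\max f_m}{\int_0^\infty f_m}\asymp\frac{t}{\sqrt m}\longrightarrow 0 .
\]
This yields the asymptotic $S_m(x)\sim \tfrac12 e^{t/2}\,m!\,(2/t)^{m+1}$ for each fixed $x\in(0,1)$, and therefore the stated limit. No uniformity in $x$ is needed.

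The main obstacle is Step 1, namely making the Euler–Maclaurin-type comparison rigorous with an error controlled only by $\max f_m$. The monotone-comparison argument does this cleanly: on each monotone piece, the sum over points spaced by $2$ differs from $\tfrac12\int$ by at most one term. An alternative route, if Step 1 proves awkward, uses the known formula $S_m(x)=\frac{(1+x)\,\text{(Eulerian-type polynomial)}}{(1-x)^{m+1}}$ together with a partial fraction/Lerch expansion
\[
\sum_{j\in\mathbb Z}\frac{1}{(t/2+\pi i j)^{m+1}}
\]
(up to signs). In that expansion the $j=0$ term dominates geometrically as $m\to\infty$, which also gives the asymptotic.
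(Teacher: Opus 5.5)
Your proof is correct, and it takes a different route from the paper's. The paper works in the complex plane. It writes $(2k+1)^m$ via Cauchy's formula and sums the geometric series to get $S_m(x)=\frac{m!}{2\pi i}\oint_{|z|=r}\frac{e^{z}}{z^{m+1}(1-xe^{2z})}\,dz$. It then moves the contour across the nearest pole $z_0=-\tfrac{1}{2}\log x$; all other poles $z_0+\pi i k$ lie strictly farther out. This yields $S_m(x)=m!\,e^{z_0}/(2z_0^{m+1})+O(m!\,R^{-m})$ for some $R>z_0$, and since $z_0=t/2$ this is exactly your main term $\tfrac{1}{2}e^{t/2}m!\,(2/t)^{m+1}$.

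You instead stay on the real line. Monotone comparison on each side of the mode $u^*=2m/t$ gives Step 1 with an absolute constant $C$. Together with the Stirling estimate $\max f_m/\int_0^\infty f_m\asymp t/\sqrt m$, this produces the same asymptotic with relative error $O(1/\sqrt m)$.

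Your argument is more elementary and self-contained. The paper's gives a geometrically small relative error $O((z_0/R)^m)$. More to the point, it is the argument that survives for complex $x$: for $x$ in the unit disc off $(-1,0]$, the pole $-\tfrac{1}{2}\log x$ is still the unique one of minimal modulus, and the bounds are locally uniform. You are right that no uniformity is needed for the statement as posed. However, the paper implicitly relies on exactly this complex, locally uniform version when it later applies the ratio asymptotics at the complex points $u(z)$. Your positivity and unimodality argument has no analogue there, since $u^m e^{-tu/2}$ oscillates once $t$ is complex.

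Your fallback via $\sum_{j\in\mathbb{Z}}(t/2+\pi i j)^{-(m+1)}$ (the signs are $(-1)^j$) is essentially the paper's method. It sums all the residues rather than bounding the remainder on $|z|=R$.
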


\begin{proof}
We first express $S_m(x)$ as a contour integral. For $n\in\mathbb{N}$ and $m\in\mathbb{N}$, Cauchy’s integral formula for derivatives gives
\[
n^m
= \left.\frac{d^m}{dz^m} e^{nz}\right|_{z=0}
= \frac{m!}{2\pi i} \oint_{|z|=r} \frac{e^{nz}}{z^{m+1}}\,dz,
\]
where $\{|z|=r\}$ is the positively oriented circle centered at $0$.

Hence
\[
(2k+1)^m
= \frac{m!}{2\pi i} \oint_{|z|=r} \frac{e^{(2k+1)z}}{z^{m+1}}\,dz.
\]
Substituting this into $S_m(x)$ and interchanging sum and integral (justified by absolute convergence for $|x|<1$ and $r$ sufficiently small) yields
\[
S_m(x)
= \frac{m!}{2\pi i} \oint_{|z|=r} \frac{1}{z^{m+1}}
   \sum_{k\ge 0} x^k e^{(2k+1)z}\,dz.
\]
The sum is geometric:
\[
\sum_{k\ge 0} x^k e^{(2k+1)z}
= e^z \sum_{k\ge 0} (x e^{2z})^k
= \frac{e^z}{1 - x e^{2z}},
\]
for $|x e^{2z}|<1$, which holds on a sufficiently small circle $|z|=r$. Thus
\[
S_m(x)
= \frac{m!}{2\pi i} \oint_{|z|=r}
  \frac{e^z}{z^{m+1} \bigl(1-x e^{2z}\bigr)}\,dz.
\]
Set
\[
g(z) := \frac{e^z}{1 - x e^{2z}},
\quad\text{so that}\quad
S_m(x) = \frac{m!}{2\pi i} \oint_{|z|=r} \frac{g(z)}{z^{m+1}}\,dz.
\]

We now analyze the singularities of $g$. The poles are determined by
\[
1 - x e^{2z} = 0
\quad\Longleftrightarrow\quad
e^{2z} = \frac{1}{x}.
\]
Since $0<x<1$, we have $1/x>1$, and all solutions are
\[
z_k = \frac{1}{2}\log\frac{1}{x} + \pi i k
    =: z_0 + \pi i k,\quad k\in\mathbb{Z},
\]
where
\[
z_0 := \frac{1}{2}\log\frac{1}{x} = -\frac{1}{2}\log x > 0
\]
is the unique pole of smallest modulus (real and positive). Each $z_k$ is a simple pole of $g$.

The residue of $g$ at $z_0$ is
\[
\operatorname{Res}(g,z_0)
= \frac{e^{z_0}}{(1 - x e^{2z})'(z_0)}
= \frac{e^{z_0}}{-2x e^{2z_0}}
= \frac{e^{z_0}}{-2x\cdot (1/x)}
= -\frac{e^{z_0}}{2}.
\]

Choose radii $0<r<|z_0|$ and $R$ such that
\(\displaystyle
|z_0| < R < \min_{k\ne 0} |z_k|.
\)
Enlarge the contour from the positively oriented circle $|z|=r$ to the positively oriented circle $|z|=R$. By the residue theorem,
\[
\oint_{|z|=r}\frac{g(z)}{z^{m+1}}\,dz
= \oint_{|z|=R}\frac{g(z)}{z^{m+1}}\,dz
  - 2\pi i\,\operatorname{Res}\Bigl(\frac{g(z)}{z^{m+1}}, z_0\Bigr),
\]
because in deforming the contour we enclose the additional pole at $z_0$ and no others (by the choice of $R$).

Since $g$ has a simple pole at $z_0$, we have
\(\displaystyle
\operatorname{Res}\Bigl(\frac{g(z)}{z^{m+1}}, z_0\Bigr)
= \frac{\operatorname{Res}(g,z_0)}{z_0^{m+1}}
= -\frac{e^{z_0}}{2 z_0^{m+1}}.
\)

Thus
\[
S_m(x)
= \frac{m!}{2\pi i} \oint_{|z|=r}\frac{g(z)}{z^{m+1}}\,dz
= \frac{m!}{2\pi i} \oint_{|z|=R}\frac{g(z)}{z^{m+1}}\,dz
  + m!\,\frac{e^{z_0}}{2 z_0^{m+1}}.
\]

On the positively oriented circle $|z|=R$, the function $g$ is analytic and bounded, say $|g(z)|\le M$ for all $|z|=R$. Hence
\(\displaystyle
\left|\frac{m!}{2\pi i} \oint_{|z|=R}\frac{g(z)}{z^{m+1}}\,dz\right|
\le \frac{m!}{2\pi} \cdot 2\pi R \cdot \frac{M}{R^{m+1}}
= m!\,M\,R^{-m}.
\)

Therefore
\[
S_m(x)
= m!\,\frac{e^{z_0}}{2 z_0^{m+1}}
  + O\bigl(m!\,R^{-m}\bigr),
\quad m\to\infty.
\]
The same argument with $m$ replaced by $m+1$ gives
\[
S_{m+1}(x)
= (m+1)!\,\frac{e^{z_0}}{2 z_0^{m+2}}
  + O\bigl((m+1)!\,R^{-(m+1)}\bigr).
\]

Thus
\(\displaystyle
\frac{S_{m+1}(x)}{S_m(x)}
= \frac{(m+1)!\,\dfrac{e^{z_0}}{2 z_0^{m+2}}
          \bigl(1 + O\bigl((R/z_0)^{-m}\bigr)\bigr)}
       {m!\,\dfrac{e^{z_0}}{2 z_0^{m+1}}
          \bigl(1 + O\bigl((R/z_0)^{-m}\bigr)\bigr)}
= \frac{m+1}{z_0}\bigl(1 + o(1)\bigr),
\quad m\to\infty,
\)
since $R>|z_0|$ implies $(R/z_0)^{-m}\to 0$.

Dividing by $m$ and letting $m\to\infty$ yields
\[
\lim_{m\to\infty} \frac{1}{m}\,\frac{S_{m+1}(x)}{S_m(x)}
= \lim_{m\to\infty} \frac{m+1}{m}\,\frac{1}{z_0}
= \frac{1}{z_0}.
\]
Because
\(\displaystyle
z_0 = \frac{1}{2}\log\frac{1}{x} = -\frac{1}{2}\log x,
\)
we obtain
\(\displaystyle
\frac{1}{z_0}
= \frac{2}{\log(1/x)}
= \frac{2}{-\log x}.
\)
This proves the claim.
\end{proof}

\begin{lemma}\label[lemma]{lemma:B-ratio}
For every $x\in(0,1)$ we have
\begin{equation}\label{eq:B-ratio}
  \lim_{m\to\infty}\frac1m \frac{B_{m+1}(x)}{B_m(x)}
  = \frac{2(1-x)}{-\log x}.
\end{equation}
\end{lemma}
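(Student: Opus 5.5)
The plan is to reduce the statement to \cref{lemma:Sm-ratio} through the classical generating-function identity for type~B Eulerian polynomials,
\[
  \frac{B_m(x)}{(1-x)^{m+1}} \;=\; \sum_{k\ge 0} (2k+1)^m x^k, \qquad |x|<1,\ m\in\mathbb{N}_0,
\]
which is the type~B analogue of Carlitz's identity (Brenti's formula). In the notation of \cref{lemma:Sm-ratio} it reads $B_m(x)=(1-x)^{m+1}S_m(x)$ for $0<x<1$. If the paper's normalization of $B_m$ is not already given in this form, I will first verify it for small $m$: $B_0=1$, $B_1(x)=1+x$, and $S_1(x)=\sum(2k+1)x^k=(1+x)/(1-x)^2$, which is consistent. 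I may also note that it follows from the recurrence $B_{m+1}(x)=\bigl(1+(2m+1)x\bigr)B_m(x)+2x(1-x)B_m'(x)$ by applying the operator $2x\frac{d}{dx}+1$ to the series, since that operator maps $S_m$ to $S_{m+1}$.

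With the identity in hand, the argument is a one-line computation. For fixed $x\in(0,1)$ we have $S_m(x)>0$, so $B_m(x)>0$ and the ratio is well defined. Then
\[
  \frac{B_{m+1}(x)}{B_m(x)} = \frac{(1-x)^{m+2}S_{m+1}(x)}{(1-x)^{m+1}S_m(x)} = (1-x)\,\frac{S_{m+1}(x)}{S_m(x)}.
\]
Dividing by $m$ and applying \cref{lemma:Sm-ratio} gives the limit $(1-x)\cdot\frac{2}{-\log x}$, which is \eqref{eq:B-ratio}.

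The only real obstacle is confirming that the generating-function identity holds with exactly the normalization of $B_m$ used in the paper, namely Eulerian polynomials of type~B counted by descents over signed permutations, with $B_m(x)=\sum_k \EulerB{m}{k}x^k$. I would cite Brenti's result directly. As a backup, I would derive the identity by induction from the standard recurrence for $\EulerB{m}{k}$ via the differential-operator argument above. Any discrepancy in normalization, such as a different power of $(1-x)$, would only rescale the limit by the factor $(1-x)$ and is easily tracked.
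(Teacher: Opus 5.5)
Your proposal is correct and follows the paper's proof exactly. The paper also invokes the identity $B_m(x)=(1-x)^{m+1}S_m(x)$ (citing its earlier work rather than Brenti), writes the ratio as $(1-x)\,S_{m+1}(x)/S_m(x)$, and applies \cref{lemma:Sm-ratio}; your remark that $B_m(x)>0$ on $(0,1)$, so the ratio is well defined, is a small point the paper leaves implicit.
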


\begin{proof}

From \cite[Lemma 2.3, Eq.7]{TallaWaffo2026arxiv2602.16761}, we know
\[
  \sum_{k=0}^\infty (2k+1)^m x^k
  = \frac{1}{(1-x)^{m+1}} B_m(x), \qquad |x|<1,
\]
Putting it in place, we have $B_m(x)=(1-x)^{m+1}S_m(x)$. Hence
\[
  \frac{B_{m+1}(x)}{B_m(x)}
  = (1-x)\frac{S_{m+1}(x)}{S_m(x)}.
\]
Combining this with \cref{lemma:Sm-ratio} yields \eqref{eq:B-ratio}..
\end{proof}

\begin{lemma}\label[lemma]{lem:log-derivative-Xi}
Let $n\in\mathbb{N}$ and $0<x<1$. Consider \(\widetilde{\Xi}_n\) as defined in \eqref{eq:Xi-def}.

Set
\(\displaystyle
y(x):=-\,\frac{1-\sqrt{x}}{1+\sqrt{x}}.
\)
Then, for every $x$ with $B_{2n-1}(y(x))\neq0$, the logarithmic derivative of
$\widetilde{\Xi}_n$ is given by
\[
\frac{\widetilde{\Xi}_n'(x)}{\widetilde{\Xi}_n(x)}
=
\frac{2n-1}{2\sqrt{x}\,(1+\sqrt{x})}
-\frac{1}{2x}
-\frac{1}{4\sqrt{x}\,(1-\sqrt{x})}
\left[
  \frac{B_{2n}\!\bigl(y(x)\bigr)}{B_{2n-1}\!\bigl(y(x)\bigr)}
  -\bigl((4n-1)\,y(x)+1\bigr)
\right].
\]
\end{lemma}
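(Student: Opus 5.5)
The plan is a direct computation. We write $\widetilde{\Xi}_n$ as a product of elementary factors in $s:=\sqrt{x}$, take the logarithmic derivative with the chain rule, and then use a differential recurrence for the type~B Eulerian polynomials to eliminate $B_{2n-1}'$ in favour of $B_{2n}$. By \eqref{eq:Xi-def} and \eqref{eq:Xitilde-def}, with $s=\sqrt{x}\in(0,1)$ and $y=y(x)=-(1-s)/(1+s)$, we have
\[
\widetilde{\Xi}_n(x)=C_n\,\frac{(1+s)^{2n-1}}{s}\,B_{2n-1}(y),
\]
where $C_n$ is a nonzero constant. Wherever $B_{2n-1}(y)\neq0$, this gives
\[
\frac{\widetilde{\Xi}_n'(x)}{\widetilde{\Xi}_n(x)}=\frac{ds}{dx}\left[\frac{2n-1}{1+s}-\frac1s+\frac{B_{2n-1}'(y)}{B_{2n-1}(y)}\,\frac{dy}{ds}\right],
\qquad \frac{ds}{dx}=\frac{1}{2s},\quad \frac{dy}{ds}=\frac{2}{(1+s)^2}.
\]
The first two terms immediately produce $\frac{2n-1}{2\sqrt{x}(1+\sqrt{x})}$ and $-\frac{1}{2x}$.

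The key step, and the only non-routine one, is the differential recurrence
\[
B_{m+1}(y)=\bigl((2m+1)y+1\bigr)B_m(y)+2y(1-y)B_m'(y).
\]
We derive it from the identity $B_m(y)=(1-y)^{m+1}S_m(y)$ used in the proof of \cref{lemma:B-ratio}, combined with the termwise relation $S_{m+1}=S_m+2yS_m'$, which holds because $(2k+1)^{m+1}y^k=(1+2k)(2k+1)^my^k$. Differentiating $S_m=B_m/(1-y)^{m+1}$ gives
\[
S_m'=\frac{B_m'}{(1-y)^{m+1}}+\frac{(m+1)B_m}{(1-y)^{m+2}}.
\]
Inserting this into
\[
B_{m+1}=(1-y)^{m+2}\bigl(S_m+2yS_m'\bigr)
\]
yields the recurrence for $|y|<1$. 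Since both sides are polynomials, it then holds identically. Taking $m=2n-1$ gives
\[
\frac{B_{2n-1}'(y)}{B_{2n-1}(y)}=\frac{1}{2y(1-y)}\left[\frac{B_{2n}(y)}{B_{2n-1}(y)}-\bigl((4n-1)y+1\bigr)\right].
\]

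Finally we simplify the prefactor. Since $1-y=\frac{2}{1+s}$, we get
\[
2y(1-y)=-\frac{4(1-s)}{(1+s)^2}.
\]
Therefore
\[
\frac{ds}{dx}\cdot\frac{dy}{ds}\cdot\frac{1}{2y(1-y)}=\frac{1}{s(1+s)^2}\cdot\left(-\frac{(1+s)^2}{4(1-s)}\right)=-\frac{1}{4\sqrt{x}(1-\sqrt{x})},
\]
which is exactly the coefficient of the bracket in the statement. The only point needing care is the recurrence. Its validity at the relevant point is guaranteed because $y(x)\in(-1,0)$ for $x\in(0,1)$, so the series $S_m$ converge there in any case.
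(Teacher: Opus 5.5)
Your proof is correct and follows the paper's argument step for step. It uses the same factorization of $\widetilde{\Xi}_n$, the same chain rule with $y'(x)=1/(\sqrt{x}(1+\sqrt{x})^2)$, the same recurrence $B_{m+1}=((2m+1)y+1)B_m+2y(1-y)B_m'$ at $m=2n-1$ to eliminate $B_{2n-1}'$, and the same simplification of $y(1-y)$. The only difference is that the paper cites the recurrence from Brenti and Chen--Tang--Zhao, whereas you derive it from $B_m=(1-y)^{m+1}S_m$ and $S_{m+1}=S_m+2yS_m'$ and extend it by polynomial identity; your derivation is correct and makes the lemma self-contained.
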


\begin{proof}
Write
\[
\widetilde{\Xi}_n(x)
=C_n\,
\frac{(1+\sqrt{x})^{2n-1}}{\sqrt{x}}\,
B_{2n-1}\!\bigl(y(x)\bigr),
\qquad
C_n:=\frac{(-1)^{n+1}}{2^{4n-1}(2n-1)!},
\quad
y(x):=-\frac{1-\sqrt{x}}{1+\sqrt{x}}.
\]
Taking logarithms,
\[
\log \widetilde{\Xi}_n(x)
= \log C_n
+ (2n-1)\log(1+\sqrt{x})
-\tfrac12\log x
+ \log B_{2n-1}\!\bigl(y(x)\bigr).
\]
Differentiating with respect to $x$ gives
\[
\frac{\widetilde{\Xi}_n'(x)}{\widetilde{\Xi}_n(x)}
=
\frac{2n-1}{2\sqrt{x}\,(1+\sqrt{x})}
-\frac{1}{2x}
+\frac{B_{2n-1}'\!\bigl(y(x)\bigr)}{B_{2n-1}\!\bigl(y(x)\bigr)}\,y'(x).
\]
A direct computation yields
\[
y'(x)=\frac{1}{\sqrt{x}\,(1+\sqrt{x})^{2}},
\]
so
\begin{equation}\label{eq:Xi-log-basic}
\frac{\widetilde{\Xi}_n'(x)}{\widetilde{\Xi}_n(x)}
=
\frac{2n-1}{2\sqrt{x}\,(1+\sqrt{x})}
-\frac{1}{2x}
+\frac{1}{\sqrt{x}\,(1+\sqrt{x})^{2}}\,
\frac{B_{2n-1}'\!\bigl(y(x)\bigr)}{B_{2n-1}\!\bigl(y(x)\bigr)}.
\end{equation}

Next, we use the recurrence for the Eulerian polynomials of type~$B$\cite[Theorem 4.1]{ChenTangZhao2009} \cite[Theorem 3.4]{Brenti1994}:
\[
B_m(z)
=
\bigl((2m-1)z+1\bigr)B_{m-1}(z)
+2z(1-z)B_{m-1}'(z).
\]
Dividing by $B_{m-1}(z)$ and then replacing $m$ by $m+1$ gives
\[
\frac{B_m'(z)}{B_m(z)}
=
\frac{1}{2z(1-z)}
\left(
  \frac{B_{m+1}(z)}{B_m(z)}
  -\bigl((2m+1)z+1\bigr)
\right),
\]
for all $z$ with $B_m(z)\neq0$.  
Taking $m=2n-1$ and $z=y(x)$, we obtain
\[
\frac{B_{2n-1}'\!\bigl(y(x)\bigr)}{B_{2n-1}\!\bigl(y(x)\bigr)}
=
\frac{1}{2y(x)\bigl(1-y(x)\bigr)}
\left(
  \frac{B_{2n}\!\bigl(y(x)\bigr)}{B_{2n-1}\!\bigl(y(x)\bigr)}
  -\bigl((4n-1)y(x)+1\bigr)
\right).
\]
Furthermore, a short calculation shows
\[
y(x)\bigl(1-y(x)\bigr)
=
-\frac{2(1-\sqrt{x})}{(1+\sqrt{x})^{2}},
\]
so that
\[
\frac{1}{\sqrt{x}\,(1+\sqrt{x})^{2}}\cdot
\frac{1}{2y(x)\bigl(1-y(x)\bigr)}
=
-\frac{1}{4\sqrt{x}\,(1-\sqrt{x})}.
\]
Substituting this into \eqref{eq:Xi-log-basic} yields
\[
\frac{\widetilde{\Xi}_n'(x)}{\widetilde{\Xi}_n(x)}
=
\frac{2n-1}{2\sqrt{x}\,(1+\sqrt{x})}
-\frac{1}{2x}
-\frac{1}{4\sqrt{x}\,(1-\sqrt{x})}
\left[
  \frac{B_{2n}\!\bigl(y(x)\bigr)}{B_{2n-1}\!\bigl(y(x)\bigr)}
  -\bigl((4n-1)y(x)+1\bigr)
\right],
\]
which is the claimed formula.
\end{proof}

\vspace{0.3cm}

\section{Limit of the Stieltjes transform, limit density and limit distribution}\label[section]{sec:limits}

\vspace{0.3cm}

For the empirical zero measures $\mu_n$ in \eqref{eq:mu-n-def}, the normalized logarithmic derivative
\(\displaystyle
  s_n(z) := \frac{1}{n-1}\frac{\widetilde{\Xi}_n'(z)}{\widetilde{\Xi}_n(z)}
\)
is the Stieltjes transform of $\mu_n$, i.e.
\begin{equation}\label{eq:sn-Stieltjes}
  s_n(z) = \int_{\mathbb{R}} \frac{1}{z-x}\,\mu_n(dx), 
  \qquad z\in\mathbb{C}\setminus[0,1].
\end{equation}

\begin{lemma}\label[lemma]{lem:s-n-limit}
Let
\[
s_n(z):=\frac{1}{n-1}\,\frac{\widetilde{\Xi}_n'(z)}{\widetilde{\Xi}_n(z)},
\qquad z\in\mathbb{C}\setminus[0,1],
\]
where $\widetilde{\Xi}_n$ is defined in \eqref{eq:Xi-def}.
Fix an analytic branch of $\sqrt{z}$ on $\mathbb{C}\setminus[0,1]$ and set
\[
u(z):=\frac{\sqrt{z}-1}{\sqrt{z}+1}.
\]
Then $|u(z)|<1$ for $z\in\mathbb{C}\setminus[0,1]$, and for every such $z$ we have
\[
s_n(z)\longrightarrow s(z)
\quad\text{locally uniformly,}
\]
where
\[
s(z)
=
\frac{1}{\sqrt{z}\,\bigl(1+\sqrt{z}\bigr)}
+\frac{1}{\sqrt{z}\,\bigl(1-\sqrt{z}\bigr)}
\left(
  u(z)+\frac{1-u(z)}{\log u(z)}
\right).
\]
\end{lemma}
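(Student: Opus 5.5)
The plan is to feed the exact formula of \cref{lem:log-derivative-Xi} into the ratio asymptotics of \cref{lemma:B-ratio}, after extending the latter from real $x\in(0,1)$ to complex arguments $w$ with $|w|<1$. The formula in \cref{lem:log-derivative-Xi} is an identity between rational functions of $\sqrt{x}$. It therefore continues analytically to $z\in\mathbb{C}\setminus[0,1]$ with the chosen branch of $\sqrt z$, away from zeros of $B_{2n-1}$. Under this continuation $y(z)=-\frac{1-\sqrt z}{1+\sqrt z}=u(z)$. Dividing by $n-1$ gives
\[
s_n(z)=\frac{2n-1}{n-1}\,\frac{1}{2\sqrt z(1+\sqrt z)}-\frac{1}{2(n-1)z}
-\frac{1}{4\sqrt z(1-\sqrt z)}\left[\frac{1}{n-1}\frac{B_{2n}(u)}{B_{2n-1}(u)}-\frac{(4n-1)u+1}{n-1}\right].
\]
Suppose we know that $\frac1m\frac{B_{m+1}(w)}{B_m(w)}\to\frac{2(1-w)}{-\log w}$ for the relevant complex $w=u(z)$. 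With $m=2n-1$, the bracket tends to $-\frac{4(1-u)}{\log u}-4u$, and the first term tends to $\frac{1}{\sqrt z(1+\sqrt z)}$. Multiplying the bracket limit by $-\frac{1}{4\sqrt z(1-\sqrt z)}$ gives exactly $\frac{1}{\sqrt z(1-\sqrt z)}\bigl(u+\frac{1-u}{\log u}\bigr)$, so the limit is the claimed $s(z)$.

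The main work is the complex, locally uniform version of \cref{lemma:Sm-ratio}. Since $B_m(w)=(1-w)^{m+1}S_m(w)$ holds for $|w|<1$, it suffices to treat $S_m(w)$. I would repeat the contour argument for $w$ in a compact subset of $\{|w|<1\}\setminus(-1,0]$. The poles of $g(\zeta)=e^\zeta/(1-we^{2\zeta})$ are $\zeta_k=-\tfrac12\operatorname{Log} w+\pi i k$, with $\operatorname{Re}\zeta_k>0$. Their imaginary parts are $-\tfrac12\arg w+\pi k$, and $\zeta_0$ is the unique pole of smallest modulus exactly when $|\arg w|<\pi$. On a compact set, the gap $\min_{k\neq0}|\zeta_k|-|\zeta_0|$ is bounded below, so a radius $R$ can be chosen uniformly. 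The same residue computation then gives
\[
S_m(w)=m!\,\frac{e^{\zeta_0}}{2\zeta_0^{m+1}}\bigl(1+O(\rho^m)\bigr)
\]
with some uniform $\rho<1$. Two consequences follow. First, $B_m(w)\neq0$ for large $m$, uniformly on compacts, so $s_n$ has no poles there eventually. Second, the ratio limit $\frac{1}{\zeta_0}=\frac{2}{-\operatorname{Log} w}$ holds locally uniformly.

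Next I would check where $u(z)$ lands. We have $|u|<1$ iff $\operatorname{Re}\sqrt z>0$, and $u\in(-1,0]$ iff $\sqrt z\in(0,1]$, i.e.\ $z\in(0,1]$, which is excluded. The obstacle is the branch. A branch of $\sqrt z$ with $\operatorname{Re}\sqrt z>0$ exists only on $\mathbb{C}\setminus(-\infty,0]$, and on $(-\infty,0)$ one gets $|u|=1$. So I would first prove the convergence on $D:=\mathbb{C}\setminus(-\infty,1]$ with the principal branch. There $\log u$ is the principal logarithm, and this is consistent with the real case because $u(z)\to u(x)\in(-1,0)$ approached from one side.

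To reach all of $\mathbb{C}\setminus[0,1]$, I would use the Stieltjes representation \eqref{eq:sn-Stieltjes}. It gives the uniform bound $|s_n(z)|\le 1/\operatorname{dist}(z,[0,1])$, so $(s_n)$ is a normal family on $\mathbb{C}\setminus[0,1]$. Convergence on the open subset $D$ then propagates, by Vitali's theorem, to locally uniform convergence on the whole connected domain. The limit is the analytic continuation of $s$, which I expect to coincide with the displayed formula, since that expression should be single-valued off $[0,1]$ across the negative axis.
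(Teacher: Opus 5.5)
Your proposal is correct, and its skeleton is the paper's: insert the formula of \cref{lem:log-derivative-Xi}, apply the ratio asymptotics with $m=2n-1$ at the point $u(z)$, and pass to the limit term by term. Your limit computation agrees with the paper's line by line.

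The difference lies in what is actually justified. The paper simply asserts that the limit in \cref{lemma:B-ratio} is locally uniform. It then evaluates that limit at complex $u(z)$ ``since $|u(z)|<1$''. There are three problems with this:
\begin{itemize}
\item \cref{lemma:Sm-ratio} is proved only pointwise for real $x\in(0,1)$.
\item Dominance of the pole $z_0$ genuinely requires $|\arg w|<\pi$, since on $(-1,0)$ two poles have equal modulus.
\item $|u(z)|<1$ is false on $(-\infty,0)$, where $|u|=1$. In fact no analytic branch of $\sqrt z$ exists on $\mathbb{C}\setminus[0,1]$, because a loop around $[0,1]$ winds once around $0$.
\end{itemize}
Your argument supplies exactly the missing pieces: the dominant-pole analysis on compact subsets of $\{|w|<1\}\setminus(-1,0]$, the restriction to $D=\mathbb{C}\setminus(-\infty,1]$ with the principal branch, and the Vitali extension via $|s_n(z)|\le 1/\operatorname{dist}(z,[0,1])$. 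Along the way it corrects the statement itself. The paper's version buys only brevity.

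There are two loose ends to tighten.

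First, a single radius $R$ need not serve a whole compact set. The modulus $|\zeta_0|=\tfrac12|\operatorname{Log} w|$ can vary by more than the gap to the next pole, so $\sup_K|\zeta_0|<\inf_K\min_{k\neq0}|\zeta_k|$ may fail. Choose $R$ locally and cover the compact set by finitely many small discs.

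Second, you only say you ``expect'' the Vitali limit to coincide with the displayed formula; this is easy to prove.
\begin{itemize}
\item Since $\frac{u}{\sqrt z(1-\sqrt z)}=-\frac{1}{\sqrt z(1+\sqrt z)}$, the first term cancels against the $u$-term, and $s(z)=\frac{2}{\sqrt z\,(1-z)\operatorname{Log}u(z)}$.
\item Under $\sqrt z\mapsto-\sqrt z$ one has $u\mapsto 1/u$. Moreover $\operatorname{Log}(1/u)=-\operatorname{Log}u$ for $u\notin(-\infty,0]$, and $u(\pm\sqrt z)\notin(-\infty,0]$ whenever $z\notin[0,1]$.
\item Hence the product $\sqrt z\operatorname{Log}u(z)$ does not depend on the branch. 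It is therefore a single-valued analytic function on $\mathbb{C}\setminus[0,1]$, and it is zero-free because $u\neq1$.
\end{itemize}
The identity theorem then identifies the limit with the displayed $s$ on all of $\mathbb{C}\setminus[0,1]$.
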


\begin{proof}
From \cref{lem:log-derivative-Xi} we have, for $z\in\mathbb{C}\setminus[0,1]$,
\[
\frac{\widetilde{\Xi}_n'(z)}{\widetilde{\Xi}_n(z)}
=
\frac{2n-1}{2\sqrt{z}\,(1+\sqrt{z})}
-\frac{1}{2z}
-\frac{1}{4\sqrt{z}\,(1-\sqrt{z})}
\left[
  \frac{B_{2n}(u(z))}{B_{2n-1}(u(z))}
  -\bigl((4n-1)u(z)+1\bigr)
\right],
\]
where $B_m$ is the Eulerian polynomial of type~$B$ and $u(z)=(\sqrt{z}-1)/(\sqrt{z}+1)$.

Dividing by $n-1$ we obtain
\[
\begin{aligned}
s_n(z)
&=
\frac{2n-1}{n-1}\,\frac{1}{2\sqrt{z}\,(1+\sqrt{z})}
-\frac{1}{2z(n-1)} \\[2mm]
&\hphantom{=} \;
-\frac{1}{4\sqrt{z}\,(1-\sqrt{z})}
\left[
  \frac{1}{n-1}\frac{B_{2n}(u(z))}{B_{2n-1}(u(z))}
  -\frac{4n-1}{n-1}u(z)
  -\frac{1}{n-1}
\right].
\end{aligned}
\]

By \cref{lemma:B-ratio}, for every $x$ with $x \in (0,1)$,
\[
\lim_{m\to\infty}\frac{1}{m}\frac{B_{m+1}(x)}{B_m(x)}
= \frac{2(1-x)}{-\log x},
\]
and this convergence is locally uniform in $x$.  
Since $|u(z)|<1$ on $\mathbb{C}\setminus[0,1]$ and $u$ is analytic there, we may apply this with $m=2n-1$ and $x=u(z)$ to get
\[
\lim_{n\to\infty}\frac{1}{2n-1}\frac{B_{2n}(u(z))}{B_{2n-1}(u(z))}
= \frac{2\bigl(1-u(z)\bigr)}{-\log u(z)},
\]
locally uniformly in $z$. Hence
\[
\lim_{n\to\infty}\frac{1}{n-1}\frac{B_{2n}(u(z))}{B_{2n-1}(u(z))}
= \lim_{n\to\infty}\frac{2n-1}{n-1}
   \cdot\frac{1}{2n-1}\frac{B_{2n}(u(z))}{B_{2n-1}(u(z))}
= 2\cdot\frac{2(1-u(z))}{-\log u(z)}
= \frac{4(1-u(z))}{-\log u(z)}.
\]

Moreover,
\[
\frac{2n-1}{n-1}\longrightarrow 2,\qquad
\frac{4n-1}{n-1}\longrightarrow 4,\qquad
\frac{1}{n-1}\longrightarrow 0,
\]
all locally uniformly in $z$. Inserting these limits into the expression for $s_n(z)$ we obtain
\[
\begin{aligned}
s(z)
&= \lim_{n\to\infty} s_n(z) \\[1mm]
&= \frac{1}{\sqrt{z}\,(1+\sqrt{z})}
-\frac{1}{4\sqrt{z}\,(1-\sqrt{z})}
\left[
  \frac{4(1-u(z))}{-\log u(z)}
  -4u(z)
\right].
\end{aligned}
\]

Using $(1-u)/(-\log u)=-(1-u)/\log u$, this simplifies to
\[
s(z)
= \frac{1}{\sqrt{z}\,(1+\sqrt{z})}
+\frac{1}{\sqrt{z}\,(1-\sqrt{z})}
\left(
  u(z)+\frac{1-u(z)}{\log u(z)}
\right),
\]
which is the claimed formula.  
All limits above are locally uniform, and sums and products of locally uniformly convergent sequences preserve local uniform convergence, hence $s_n\to s$ locally uniformly on $\mathbb{C}\setminus[0,1]$.
\end{proof}

\begin{theorem}[Limiting density]\label{thm:limiting-density}
Let $\widetilde{\Xi}_n$ be defined by \eqref{eq:Xi-def}, and let
$\mu_n$ be the empirical zero measure from \eqref{eq:mu-n-def}.
Then $\mu_n$ converges weakly, as $n\to\infty$, to a probability measure
$\mu$ supported on $(0,1)$ with density
\begin{equation}\label{eq:rho-density}
  \rho(x)
  =
  \frac{2}{
    \sqrt{x}\,(1-x)
    \bigl(\log^2\!\frac{1-\sqrt{x}}{1+\sqrt{x}} + \pi^2\bigr)
  },
  \qquad 0<x<1,
\end{equation}
and $\rho(x)=0$ for $x\notin(0,1)$.
\end{theorem}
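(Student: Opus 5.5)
The approach is the Stieltjes transform method built on \cref{lem:s-n-limit}. All $\mu_n$ are probability measures supported in $[0,1]$, so the family is tight. By Helly's selection theorem every subsequence has a further subsequence converging weakly to some probability measure $\nu$ on $[0,1]$. For fixed $z\in\mathbb{C}\setminus[0,1]$ the function $x\mapsto 1/(z-x)$ is bounded and continuous on $[0,1]$, so $s_{n_k}(z)\to\int (z-x)^{-1}\nu(dx)$. By \cref{lem:s-n-limit} this limit equals $s(z)$. A finite measure on a compact set is determined by its Stieltjes transform off its support, so every subsequential limit is the same measure $\mu$, and the full sequence converges weakly to $\mu$. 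It remains to show that $\mu$ has density $\rho$.

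To identify $\mu$, I will use the Stieltjes--Perron inversion formula: for $x\in(0,1)$,
\[
\rho(x)=-\frac{1}{\pi}\lim_{\varepsilon\downarrow0}\operatorname{Im} s(x+i\varepsilon),
\]
provided the boundary values exist and are locally bounded. Then $\mu$ restricted to $(0,1)$ is absolutely continuous with density $\rho$.

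\textbf{Computing the boundary values.} Take $\sqrt{z}$ to be the principal branch. As $z=x+i\varepsilon\to x\in(0,1)$ from above, $\sqrt z\to\sqrt x$ and
\[
u(z)\to u_0:=\frac{\sqrt x-1}{\sqrt x+1}\in(-1,0).
\]
This is a point on the branch cut of $\log$, so the whole imaginary part of $s$ comes from the $\log u$ term. For $\varepsilon>0$ small, $u(z)$ lies slightly above or below the negative axis, so $\log u\to \log|u_0|\pm i\pi$. Write $L:=\log\frac{1-\sqrt x}{1+\sqrt x}$. Then
\[
\operatorname{Im}\frac{1-u_0}{L\pm i\pi}=\mp\frac{\pi(1-u_0)}{L^2+\pi^2}.
\]
Using $1-u_0=\frac{2}{1+\sqrt x}$ and $\sqrt x(1-\sqrt x)(1+\sqrt x)=\sqrt x(1-x)$, this gives
\[
-\frac1\pi\operatorname{Im}s=\frac{2}{\sqrt x(1-x)(L^2+\pi^2)}
\]
once the sign is checked. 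I will fix the sign by computing $\operatorname{Im}\sqrt{x+i\varepsilon}>0$. Then $u$ has derivative $1/(\sqrt z(1+\sqrt z)^2)>0$ in $\sqrt z$, so $\operatorname{Im}u>0$ and $\log u\to L+i\pi$. As a consistency check, the sign must make $\rho\ge0$.

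\textbf{Excluding mass at the endpoints and outside $(0,1)$.} Off $[0,1]$ the function $s$ is analytic, so no mass lies there. I still need $\mu(\{0\})=\mu(\{1\})=0$, and I expect this to be the main obstacle. The boundary behaviour of $s$ near $z=0$ and $z=1$ is singular, with $\log u\to-\infty$ at $z=1$ and $\sqrt z\to0$ at $z=0$, so the point masses cannot be read off locally. The simplest route is to show $\int_0^1\rho(x)\,dx=1$; since $\mu$ is a probability measure whose absolutely continuous part already has total mass one, there is no room for atoms. To compute the integral, substitute $t=\log\frac{1+\sqrt x}{1-\sqrt x}$, i.e.\ $\sqrt x=\tanh(t/2)$. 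Then
\[
dx/(\sqrt x(1-x))=\frac{2\,d\sqrt x}{1-x}=dt,
\]
so
\[
\int_0^1\rho=\int_0^\infty\frac{2\,dt}{t^2+\pi^2}=1.
\]
This also yields the distribution function $F$ as a by-product. Alternatively, one could check that $z\,s(z)\to0$ as $z\to0$ and $(z-1)s(z)\to0$ as $z\to1$ non-tangentially, but the normalization argument is cleaner. A remaining technical point is to justify the inversion formula: the locally uniform convergence of $s(x+i\varepsilon)$ on compact subsets of $(0,1)$ follows from the explicit continuity of $s$ up to the cut there.
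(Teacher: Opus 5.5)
Your proposal is correct and follows essentially the same route as the paper. Both arguments get convergence of the Stieltjes transforms from \cref{lem:s-n-limit} and then apply Stieltjes--Perron inversion with the boundary value $\log u(x+i\varepsilon)\to\log\frac{1-\sqrt{x}}{1+\sqrt{x}}+i\pi$.

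Your version also fills in two points the paper leaves unargued. The Helly-selection argument spells out existence and uniqueness of the limit measure, which the paper only cites as standard. The normalization $\int_0^1\rho(x)\,dx=1$ via $t=\log\frac{1+\sqrt{x}}{1-\sqrt{x}}$ rules out atoms at $0$ and $1$, which the paper's proof does not address.

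The only slip is cosmetic: $1/(\sqrt{z}(1+\sqrt{z})^2)$ is $du/dz$, not the derivative with respect to $\sqrt{z}$. Your conclusion $\operatorname{Im}u>0$ is still correct.
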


\begin{proof}
By definition, the normalized logarithmic derivative
\[
  s_n(z) := \frac{1}{n-1}\,\frac{\widetilde{\Xi}_n'(z)}{\widetilde{\Xi}_n(z)},
  \qquad z\in\mathbb{C}\setminus[0,1],
\]
is the Stieltjes transform of $\mu_n$, i.e.
\[
  s_n(z) = \int_{\mathbb{R}} \frac{1}{z-x}\,\mu_n(dx),
  \qquad z\in\mathbb{C}\setminus[0,1].
\]

\Cref{lem:s-n-limit} shows that $s_n(z)\to s(z)$ locally uniformly on
$\mathbb{C}\setminus[0,1]$, where
\begin{equation}\label{eq:s-limit}
  s(z)
  =
  \frac{1}{2\sqrt{z}\,(1+\sqrt{z})}
  +
  \frac{1}{\sqrt{z}\,(1-\sqrt{z})}
  \left(
    u(z) + \frac{1-u(z)}{\log u(z)}
  \right),
\end{equation}
and
\(\displaystyle
  u(z):=\frac{\sqrt{z}-1}{\sqrt{z}+1},
\)
for a fixed analytic branch of $\sqrt{z}$ on $\mathbb{C}\setminus[0,1]$.
In particular, $s$ is analytic on $\mathbb{C}\setminus[0,1]$.

Each $\mu_n$ is a probability measure, so
$s_n(z) = \frac{1}{z} + O(z^{-2})$ as $z\to\infty$, and the convergence
$s_n\to s$ implies
\(\displaystyle
  s(z) = \frac{1}{z} + O(z^{-2}), \qquad z\to\infty.
\)
By standard properties of Stieltjes transforms (see, e.g.,
\cite[Chap.~III]{Billingsley-convergence}), there exists a unique
probability measure $\mu$ supported on $[0,1]$ whose Stieltjes
transform is $s$, and $\mu_n\Rightarrow\mu$ weakly.

It remains to identify the density of $\mu$ on $(0,1)$.  
For $x\in(0,1)$ and $\varepsilon>0$ we consider $s(x+i\varepsilon)$
with $\varepsilon$ tending to $0$ from above.  
The Stieltjes inversion formula yields
\[
  \rho(x)
  = -\frac{1}{\pi}\,\lim_{\varepsilon\rightarrow 0^+}\Im s(x+i\varepsilon),
  \qquad 0<x<1.
\]

For $x\in(0,1)$ we have $\sqrt{x}>0$ and
\[
  u(x) = \frac{\sqrt{x}-1}{\sqrt{x}+1} \in (-1,0).
\]
Writing
\[
  q(x):=\frac{1-\sqrt{x}}{1+\sqrt{x}}\in(0,1),
  \qquad u(x) = -q(x),
\]
and using the branch of the logarithm continuous from the upper
half-plane, we obtain
\[
  \lim_{\varepsilon\rightarrow 0^+}\log u(x+i\varepsilon)
  =
  \log q(x) + i\pi,
\]
where $\log$ on the right-hand side denotes the real natural logarithm
of $q(x)\in(0,1)$.  Hence
\[
  \lim_{\varepsilon\rightarrow 0^+}\frac{1}{\log u(x+i\varepsilon)}
  =
  \frac{\log q(x) - i\pi}{\log^2 q(x) + \pi^2}.
\]

Substituting this limit into \eqref{eq:s-limit} with $z=x+i\varepsilon$,
and noting that all remaining terms in \eqref{eq:s-limit} converge to
real limits as $\varepsilon\rightarrow 0^+$, a straightforward computation
of the limit of the imaginary parts gives
\[
  -\frac{1}{\pi}\,\lim_{\varepsilon\rightarrow 0^+}\Im s(x+i\varepsilon)
  =
  \frac{2}{
    \sqrt{x}\,(1-x)
    \bigl(\log^2\!\tfrac{1-\sqrt{x}}{1+\sqrt{x}} + \pi^2\bigr)
  }.
\]
Thus $\mu$ has density $\rho(x)$ on $(0,1)$ given by \eqref{eq:rho-density}
and vanishes off $(0,1)$, which completes the proof.
\end{proof}

\begin{theorem}[Limiting zero distribution]\label{thm:limiting-zero-distribution}
Let $\widetilde{\Xi}_n$ be defined by \eqref{eq:Xi-def}, and let
$\mu_n$ be the empirical zero measure from \eqref{eq:mu-n-def}.
Let $F_n$ denote the distribution function of $\mu_n$, i.e.
\[
F_n(x):=\mu_n((-\infty,x]),\qquad x\in\mathbb{R}
\]
Then $F_n(x)$ converges pointwise on $\mathbb{R}$ to the function
\[
F(x):=
\begin{cases}
0, & x\le 0,\\[1mm]
\displaystyle \frac{2}{\pi}\arctan\!\left(
  \frac{1}{\pi}\log\frac{1+\sqrt{x}}{1-\sqrt{x}}
\right), & 0<x<1,\\[3mm]
1, & x\ge 1.
\end{cases}
\]
In particular, the empirical distribution functions $F_n$ associated
with the real zeros of $\widetilde{\Xi}_n$ in $(0,1)$ converge to $F$
at every continuity point of $F$.
\end{theorem}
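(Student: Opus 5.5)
We derive the statement from \cref{thm:limiting-density}, which already gives $\mu_n\Rightarrow\mu$ weakly with density $\rho$ from \eqref{eq:rho-density}. Two things remain: identify the distribution function of $\mu$ in closed form, and pass from weak convergence to pointwise convergence of $F_n$.

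For the first task we set $F(x)=\int_0^x\rho(t)\,dt$ for $0<x<1$. To evaluate the integral we substitute $t=w^2$, so that $dt=2w\,dw$ and $1-t=1-w^2$, and then put
\[
L(w):=\log\frac{1+w}{1-w}, \qquad L'(w)=\frac{2}{1-w^2}.
\]
The integrand $\rho(t)\,dt$ becomes
\[
\frac{2}{w(1-w^2)(L(w)^2+\pi^2)}\,2w\,dw=\frac{2\,L'(w)\,dw}{L(w)^2+\pi^2}.
\]
Since $\log^2\frac{1-w}{1+w}=L(w)^2$ and $L(0)=0$, this integrates to
\[
F(x)=\frac{2}{\pi}\arctan\!\Big(\frac{L(\sqrt x)}{\pi}\Big).
\]
As $x\to1^-$ we have $L(\sqrt{x})\to\infty$, so $F(x)\to\frac{2}{\pi}\cdot\frac{\pi}{2}=1$. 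This confirms that $\mu$ has total mass $1$, so no mass sits at the endpoints or outside $(0,1)$, consistent with \cref{thm:limiting-density}. It also shows that $F=0$ on $(-\infty,0]$ and $F=1$ on $[1,\infty)$.

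For the second task we use the portmanteau theorem (equivalently Pólya's theorem, since the limit is continuous): weak convergence $\mu_n\Rightarrow\mu$ implies $F_n(x)\to F(x)$ at every continuity point of $F$. Because $\mu$ is absolutely continuous, $F$ is continuous on all of $\mathbb{R}$; in particular it is continuous at $0$ and $1$, where it is $0$ and $1$ respectively. Hence the convergence is pointwise on $\mathbb{R}$. By Pólya's theorem it is even uniform.

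The main point requiring care is the normalization check. The Stieltjes-transform argument in \cref{thm:limiting-density} only identified the density on $(0,1)$, so one should confirm that $\int_0^1\rho=1$, so that $\mu$ has no atoms at $0$ or $1$. The computation above, via the limit $\arctan(\infty)=\pi/2$, settles this directly.
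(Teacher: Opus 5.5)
Your proposal is correct and follows the same route as the paper. You take $\mu_n\Rightarrow\mu$ from \cref{thm:limiting-density}, identify $F(x)=\int_0^x\rho(t)\,dt$ with the arctan formula, and use continuity of $F$ to get convergence at every point. Your substitution $t=w^2$ with $L'(w)=2/(1-w^2)$ actually carries out the computation that the paper only reduces to checking $G'=\rho$, and your check that $F(1^-)=1$ rules out atoms at $0$ and $1$, which the paper takes for granted.
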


\begin{proof}
By \cref{thm:limiting-density}, the measures $\mu_n$ converge
weakly to a probability measure $\mu$ supported on $(0,1)$ with density
\[
\rho(x)=\frac{2}{
  \sqrt{x}\,(1-x)
  \bigl(\log^2\!\tfrac{1-\sqrt{x}}{1+\sqrt{x}}+\pi^2\bigr)
},
\qquad 0<x<1,
\]
and $\rho(x)=0$ for $x\notin(0,1)$.  Hence the distribution function of
$\mu$ is
\[
F(x)=\mu((-\infty,x])=
\begin{cases}
0, & x\le 0,\\[1mm]
\displaystyle \int_0^x \rho(t)\,dt, & 0<x<1,\\[3mm]
1, & x\ge 1.
\end{cases}
\]

For $0<x<1$ we define
\(\displaystyle
G(x):=\frac{2}{\pi}\arctan\!\left(
  \frac{1}{\pi}\log\frac{1+\sqrt{x}}{1-\sqrt{x}}
\right).
\)
Note that $G(0+)=0$.  It suffices to show that $G'(x)=\rho(x)$ on
$(0,1)$, because then $G(x)=\int_0^x\rho(t)\,dt$ and hence $F(x)=G(x)$
for $0<x<1$. Consequently,
\[
F(x)=\int_0^x \rho(t)\,dt = G(x)
\quad\text{for }0<x<1,
\]
while $F(x)=0$ for $x\le0$ and $F(x)=1$ for $x\ge1$.

Since $\mu_n\Rightarrow\mu$ and $F$ is continuous on $\mathbb{R}$, standard
results on weak convergence of probability measures (e.g.\
\cite[Thm.~2.3]{Billingsley-convergence}) imply that $F_n(x)\to F(x)$ at
every continuity point of $F$. This completes the proof.
\end{proof}

\vspace{0.4cm}

\section{Interpretation and related conjecture}\label[section]{sec:interpretation}

\vspace{0.4cm}

\textit{In \cite{TallaWaffo2026arxiv2602.16761} we conjectured that the smallest zero of $\widetilde{\Xi}_n$ converges to $0$. 
However, the theorem proved there does not fully capture the specific properties and structure of this polynomial family. 
In this short section we confirm the conjecture.}

\vspace{0.2cm}

Using the limiting density from \cref{thm:limiting-density} and the explicit distribution function in \cref{thm:limiting-zero-distribution}, we obtain a fairly detailed picture of the zero distribution of the polynomials $\widetilde{\Xi}_n$. The weak convergence $\mu_n\Rightarrow\mu$ with density $\rho$ supported on $(0,1)$ implies that, asymptotically, almost all real zeros lie in the open unit interval and their empirical distribution is governed by~$\rho$. In particular, the empirical distribution functions $F_n$ converge pointwise to the limiting distribution function $F$, so the $k$-th ordered zero $x_{k,n}\in(0,1)$ satisfies the asymptotic relation
\[
  \frac{k}{n}\approx F(x_{k,n}),\qquad n\to\infty,
\]
at every continuity point of $F$. Thus the limiting law $F$ acts as a deterministic profile for the ordered zeros.

\vspace{0.2cm}

The shape of $\rho$ reveals a pronounced clustering of zeros near both endpoints $0$ and $1$. Near the origin we have
\[
  \rho(x)\sim \frac{2}{\pi^2\sqrt{x}},\qquad x\rightarrow 0^+,
\]
so the density diverges like $x^{-1/2}$, but in an integrable way. Integrating this asymptotic behaviour and inverting the relation $F(x)\approx \tfrac{4}{\pi^2}\sqrt{x}$ as $x\downarrow0$ shows that, for fixed $k$ and $n\to\infty$,
\[
  x_{k,n}\sim \frac{\pi^4}{16}\,\frac{k^2}{n^2}.
\]
In particular, the smallest zero has size of order $n^{-2}$; more generally, the leftmost zeros form a quadratic edge with approximately parabolic spacing when viewed on the scale $k/n$.

\vspace{0.2cm}

The behaviour near $x=1$ is more singular. Writing
\[
  L(x):=\log\frac{1+\sqrt{x}}{1-\sqrt{x}},
\]
we have $L(x)\to+\infty$ as $x\uparrow1$, and the distribution function
\[
  F(x)=\frac{2}{\pi}\arctan\!\Bigl(\frac{1}{\pi}L(x)\Bigr)
\]
satisfies
\[
  1-F(x)\sim \frac{2}{L(x)},\qquad x\uparrow 1.
\]
Since $L(x)$ grows like $|\log(1-\sqrt{x})|$, this shows that the right tail of the limiting distribution decays only logarithmically in the distance to~$1$. Translating this into information on the ordered zeros, we see heuristically that if $n-k$ is fixed while $n\to\infty$, then
\[
  1-F(x_{k,n})\approx \frac{n-k}{n}\sim \frac{2}{L(x_{k,n})},
\]
so $L(x_{k,n})$ is of order $n$, and consequently $1-x_{k,n}$ is exponentially small in~$n$. Thus the rightmost zeros approach $1$ at an essentially exponential rate, much faster than any power of $n^{-1}$. This strong pile-up near $x=1$ is the analytic manifestation of the logarithmic singularity in the density.

\vspace{0.2cm}

In summary, the zeros of $\widetilde{\Xi}_n$ accumulate on the interval $(0,1)$ with a density that blows up at both endpoints, but in two different regimes: a square-root edge at $0$ and a logarithmically corrected edge at $1$. The smallest zeros live on the natural $n^{-2}$ scale, whereas the largest zeros are squeezed exponentially close to $1$. In between these two extremes, the spacing of zeros is of order $1/n$ and is well described by the smooth limiting density $\rho(x)$.

\vspace{0.4cm}

\section{Numerical experiments}\label[section]{sec:numerics}

We close this work with a numerical illustration of the cumulative distribution functions associated with the real zeros of $\widetilde{\Xi}_n$ in $(0,1)$.

Rather than emphasizing high-precision computations of individual zeros, we focus on the empirical distribution functions $F_n$ and their visual comparison with the limiting distribution $F$. The plots clearly indicate the convergence predicted by \cref{thm:limiting-density} and \cref{thm:limiting-zero-distribution}: as $n$ increases, the empirical CDFs become progressively closer to the limiting curve.

In particular, one observes that the smallest zeros drift toward $0$ on the natural $n^{-2}$–scale, while the largest zeros cluster rapidly near $1$, reflected in the steep rise of the CDF close to the boundary.

\begin{center}
	\includegraphics[width=0.95\textwidth]{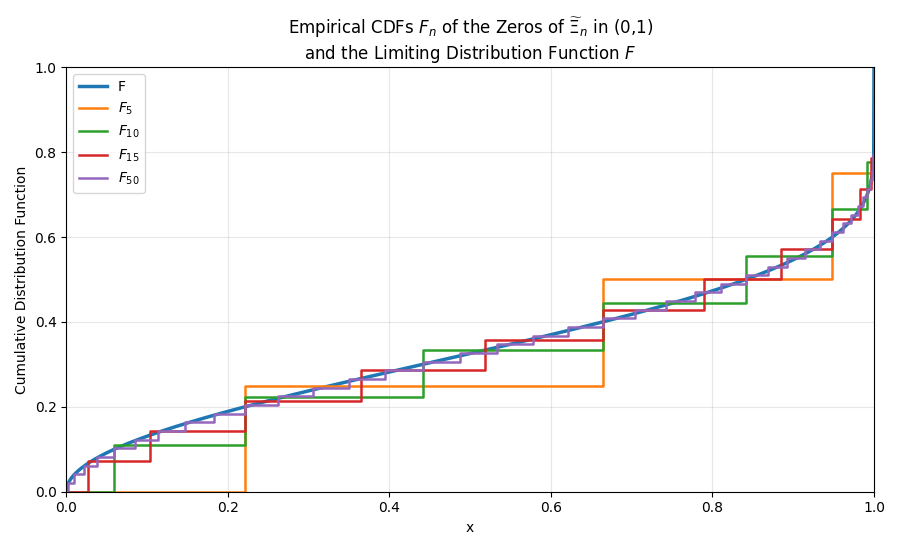}
\end{center}

\section*{Acknowledgments}
The author acknowledges the use of an AI language model for assistance with literature search, presentation of the manuscript, verification of results and clarifying standard probability notions.

\printbibliography

\end{document}